\theoremstyle{plain}
\newtheorem{lemma}{Lemma}[section]
    \newtheorem{theorem}[lemma]{Theorem}
    \newtheorem{proposition}[lemma]{Proposition}
    \newtheorem{corollary}[lemma]{Corollary}
    \newtheorem*{conjecture*}{Conjecture}
\theoremstyle{definition}
    \newtheorem*{problem*}{Problem}
\theoremstyle{remark}
    \newtheorem{remark}[lemma]{Remark}
    \newtheorem*{remark*}{Remark}
    \newtheorem*{metaconjecture*}{Metaconjecture}
\numberwithin{equation}{section}
\newcommand{\customitem}[3][condition]{%
    \item[#3] %#3 is custom item name
    \begingroup
    \cref@constructprefix{#1}{\cref@result}%
    \protected@edef\@currentlabel{%
    #3}%
    \protected@edef\@currentlabelname{#3}%
    \protected@edef\cref@currentlabel{%
    [#1][][\cref@result]%
    #3%
  }
  \label[#1]{#2}% %#1 is optional label type, %#2 is label name
  \endgroup  
}
\crefname{condition}{condition}{conditions} \crefformat{condition}{Condition~#2#1#3}
\newcommand{\GL}{\operatorname{GL}}
\newcommand{\SL}{\operatorname{SL}}
\newcommand{\Ma}{\operatorname{M}}
\newcommand\Char{\operatorname{char}}
\newcommand{\ZZ}{\mathcal{Z}}
\renewcommand{\O}{\mathcal{O}}
\newcommand{\U}{\mathcal{U}}
\newcommand{\qa}[3]{\left(\frac{#1, #2}{#3}\right)}
\newcommand\PCI{\operatorname{PCI}}
\newcommand\Cay{\operatorname{Cay}}
\newcommand\ot{\otimes}
\newcommand\op{\oplus}
\newcommand\mc{\mathcal}
\newcommand\reallywidehat[1]{%
\savestack{\tmpbox}{\stretchto{%
  \scaleto{%
    \scalerel*[\widthof{\ensuremath{#1}}]{\kern-.6pt\bigwedge\kern-.6pt}%
    {\rule[-\textheight/2]{1ex}{\textheight}}%WIDTH-LIMITED BIG WEDGE
  }{\textheight}% 
}{0.5ex}}%
\stackon[1pt]{#1}{\vstretch{1.5}{\tmpbox}}%
}
\newcommand{\N}{{\mathbb N}}
\newcommand{\Q}{{\mathbb Q}}
\newcommand{\Z}{{\mathbb Z}}
\newcommand\restr[2]{{% we make the whole thing an ordinary symbol
    \left.\kern-\nulldelimiterspace % automatically resize the bar with \right
    {#1} % the function
    %\vphantom{\big|} % pretend it's a little taller at normal size
    \right|_{#2} % this is the delimiter
    }}
\renewcommand{\restr}[2]{
    \ensuremath{
    {#1}_{\mkern2mu {\vrule height 2ex} \mkern2mu {#2}}
    }}
\renewcommand{\restr}[2]{
    \sbox0{$#1$}
    \ensuremath{
    {#1}\raisebox{-1.2ex}{%
        \(\mkern2mu {\vrule height 2.4ex} \mkern2mu {\scriptstyle{#2}}\)}
    }}
\newcommand{\Fi}{F} %field in the ping-pong theorem
\newcommand{\Pnai}[1]{({P}\textsubscript{na\"i})} %Pnai
\newif\ifnobrackets
\renewcommand\@cite[2]{\ifnobrackets\else[\fi{#1\if@tempswa , #2\fi}\ifnobrackets\else]\fi\nobracketsfalse}
\begin{document}

\title[Unit theorem for several ends and applications]{A unit theorem for products of groups with several ends and Applications}
\author{Geoffrey Janssens}
\address{(Geoffrey Janssens) \newline Institut de Recherche en Math\'ematiques et Physique, UCLouvain, 1348 Louvain-la-Neuve, Belgium and \newline Department of Mathematics and Data Science, Vrije Universiteit Brussel,
    Pleinlaan $2$, 1050 Elsene
    \newline E-mail address: \texttt{geoffrey.janssens@uclouvain.be}}

\begin{abstract}
In his $1994$ survey \cite{KleinertSurvey}, Kleinert defined formally and formulated the problem to obtain unit theorems for unit groups of orders in a semisimple algebra $A$. If $A$ is a group algebra $FG$, it boils down to classifying all finite groups $G$ such that the unit groups of most orders in $FG$ belong to a prescribed class $\mc{G}$ of infinite groups. We solve this problem for $\mc{G}$ consisting of the groups which are virtually a direct product of groups whose Cayley graph has more than one end. Subsequently, we obtain two types of applications. A first type being about the existence of torsion-free normal complements and a second about obtaining short and uniform proofs of some of the main results in \cite{JesLea2, JL3, JesPar, Jes}.
\end{abstract}

\subjclass{20C05, 20E06, 16S34}

\keywords{group rings, virtual structure problem, number of ends, amalgamated products and HNN extensions over finite groups, normal complements}

\thanks{The author is grateful to the Fonds Wetenschappelijk Onderzoek vlaanderen -- FWO (grant 2V0722N) and to the Fonds de la Recherche Scientifique -- FNRS (grant 1.B.239.22) for their financial support.}

\maketitle

\newcommand\blfootnote[1]{%
  \begingroup
  \renewcommand\thefootnote{}\footnote{#1}%
  \addtocounter{footnote}{-1}%
  \endgroup
}
\vspace{-0,3cm}

\section{Unit theorems and the virtual structure problem}\label{subsec:introductionVSP}
\subsection{Historical background of the problem}
The interest in structure theorems for unit groups of orders goes back to the dawn of ring theory via for example Dirichlet's Unit Theorem for ring of integers of number fields and to the pioneering work of Higman \cite{Higman} in case of group algebras. Such results are called `unit theorems' and a concrete meaning was formulated and asked for in Kleinert's $1994$ survey \cite{KleinertSurvey}:
\begin{quote}
``A unit theorem for a finite-dimensional semisimple rational algebra $\Lambda$ consists of the definition, 
in purely group-theoretical terms, of a class of groups $C(\Lambda)$ such that almost all generic unit groups of $\Lambda$ are members of $C(\Lambda)$.''
\end{quote} 
Recall that a generic unit group of $\Lambda$ is a subgroup of finite index in the group of elements of reduced norm $1$ of an order\footnote{In this article an order refers to a $\Z$-order, but the question also makes sense for more general orders.} in $\Lambda$. 

An important feature of orders in semisimple algebras is that their unit groups are commensurable \cite[Lemma 4.6.9]{EricAngel1}. Therefore, in the case that the class $C(\Lambda)$ is closed under commensurability, a generic unit group will be in $C(\Lambda)$ if and only if the unit group of one specific order is in $C(\Lambda)$. For instance, in the case that $\Lambda= \Fi G$ is a group algebra with $F$ a number field, one could choose the order $RG$ with $R$ the ring of integers of $F$. In this setting, Kleinert's question boils down to the `Virtual Structure Problem' which was formulated explicitly for the first time around $2000$ in \cite{JesRioReine}:

\begin{problem*}[Virtual Structure Problem] \label{que:VSP}
Let $\mc{G}$ be a class of groups. 
Classify the finite groups $G$ and the number fields $\Fi$ such that $\U(RG)$ has a subgroup of finite index which belongs to the class $\mathcal{G}$.
\end{problem*}

Up to our knowledge, until quite recently the only classes of groups $\mathcal{G}$ for which the virtual structure problem was known were the following:

\begin{itemize}
    \item $\mc{G}_{fin} = \{ \text{ finite groups } \},$ \cite[Corollary 5.5.8]{EricAngel1}
    \item $\mc{G}_{ab} = \{ \text{ abelian groups } \},$ \cite{Higman}
    \item $\mc{G}_{solv}  = \{ \text{ solvable groups } \},$ \cite[Theorem 2]{Kleinert}
    \item $\mc{G}_{fbf}  = \{ \text{ direct product  of free-by-free groups } \}$ and certain subclasses (\cite{JesLealRio, JesRioReine, PitaRioRuiz, FreebyFree}), 
    %\item $\mc{G}_{fix} $ are groups satisfying some fixed point property such as property (T) or HFA (\cite{BJJKT, BJJKT2}).
\end{itemize}

The class $\mc{G}_{fbf}$ was the culmination of various works superseding each other. For instance, prior to $\mc{G}_{fbf}$ and building on \cite{Jes, JesLealRio, LealRio, JesLeal}, Jespers and Del R\`io answered in \cite{JesRioReine} the virtual structure problem for 
\[
\mathcal{G}_{pab} = \Big\{ \prod_i A_{i,1} \ast \cdots \ast A_{i,t_i} \mid A_{i,j} \text{ are finitely generated abelian } \Big\},
\]
where $t_i =1$ is allowed (i.e.\ an abelian factor). 
It turns out that the classification coincides with that of the class of products of free groups (throughout, $\Z$ is considered to be free). 

The answer for $\mc{G}_{fbf}$ already dates back to $2007$ and skepticism grew whether there were other unit theorems. Recently, new classes were achieved through the point of view of geometric group theory. Concretely, the virtual structure problem was answered for classes of groups satisfying some fixed point property such as property (T) or HFA \cite{BJJKT, BJJKT2}.

Remarkably, in all these cases there is also a characterization in terms of the  group algebra. For example, for $F = \Q$ and $\mathcal{G}_{fbf}$, one has that all generic unit groups of $F G$ are virtually in $\mc{G}_{fbf}$ if and only if every simple factor of $F G$ is either a field, a totally definite quaternion algebra or $\Ma_2(K)$, where $K$ is either $\Q$, $\Q(i)$, $\Q(\sqrt{-2})$ or $\Q(\sqrt{-3})$.

\subsection{Contributions to the Virtual Structure Problem}

In this article, we will obtain new unit theorems. Foremost, we answer the virtual structure problem for the class
\[
\mathcal{G}_{\infty} := \Big\{ \prod_i \Gamma_i \mid \Gamma_i \text{ has infinitely many ends } \Big\}.
\]

Recall that given a finitely generated group $\Gamma$, the \emph{number of ends} $e(\Gamma)$ is the infimum of the number of infinite connected components of $\Cay(\Gamma, S) \setminus F$, for $F$ going through finite subsets of $\Cay(\Gamma, S)$.  This number $e(\Gamma)$ does not depend on the choice of a generating set $S$ for $\Gamma$. 

It is well known that for a finitely generated group $\Gamma$, the  number of ends $e(\Gamma)$ is either $0$, $1$, $2$, or $\infty$. By Stallings' theorem \cite{StaPaper,StaBook} a group has infinitely many ends if and only if it can be decomposed as an amalgamated product or an HNN extension, over a finite group.  In fact, we will mainly work with this characterization. It is known that commensurable groups have equal number of ends, see \cite[p.\ 38]{StaBook}. Hence resolving the virtual structure problem for $\mc{G}_{\infty}$ also yields a unit theorem in the sense of Kleinert.

Despite that the class $\mc{G}_{\infty}$ is much larger than the class 
$$\Big\{ \prod_i \Gamma_i \mid \Gamma_i \text{ is a non-cyclic free group} \Big\},$$ we will prove that the virtual structure problem for them has the same answer. 
We will say that a group \emph{virtually belongs to $\mc{G}$} or \emph{is virtually-$\mc{G}$}, if it has a subgroup of finite index belonging to $\mc{G}$. Also, recall that a group $G$ is said to be a \emph{cut group} if $\ZZ (\U ( \Z G))$ is finite.

\begin{theorem}\label{VSP for infinite ends}
Let $G$ be a finite group and $\Fi$ be a number field with ring of integers $R$. 
The following are equivalent. 
\begin{enumerate}[itemsep=1ex,topsep=1ex,label=\textup{(\roman*)}]
\item \label{item:vinf for all}  $\Gamma$ is virtually-$\mc{G}_{\infty}$ for all orders $\O$ in $\Fi G$ and all finite-index subgroups $\Gamma \leq \U(\O)$. 
\item \label{item:vinf for grp ring} $\Fi = \Q$ and $\U(R G)$ is virtually-$\mathcal{G}_{\infty}$. 
\item \label{item:vinf in comp}  All simple components of $\Fi G$ are of the form $\Q(\sqrt{-d})$ with $d \in \mathbb{N}$, $\qa{-a}{-b}{\Q}$ with $a,b \in \N \setminus \{0\}$, or $\Ma_2(\Q)$, and the latter occurs at least once. 
\end{enumerate}
If so, $G$ is a cut group and only $(-1,-1)$ and $(-1,-3)$ can occur for $(-a,-b)$. 
\end{theorem}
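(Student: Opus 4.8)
The plan is to run the equivalence around the cycle $\ref{item:vinf for grp ring}\Rightarrow\ref{item:vinf for all}\Rightarrow\ref{item:vinf in comp}\Rightarrow\ref{item:vinf for grp ring}$, with essentially all the content sitting in $\ref{item:vinf for all}\Rightarrow\ref{item:vinf in comp}$. Throughout I work with the Wedderburn decomposition $\Fi G\cong\prod_i A_i$ into simple components $A_i=\Ma_{n_i}(D_i)$ with centre $Z_i\supseteq\Fi$, and with the fact \cite[Lemma~4.6.9]{EricAngel1} that for \emph{every} order $\O$ the group $\U(\O)$ is commensurable with $\prod_i\U(\Lambda_i)$ for any choice of orders $\Lambda_i\subseteq A_i$. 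Since the number of ends is a commensurability invariant \cite[p.~38]{StaBook}, and since any finite-index subgroup of a direct product contains a finite-index product of finite-index subgroups of the factors, the class ``virtually-$\mc{G}_\infty$'' is itself invariant under commensurability. As $\Z G$ is one order in $\Q G$ and every $\Gamma$ as in \ref{item:vinf for all} is commensurable with $\U(\Z G)$, this gives $\ref{item:vinf for grp ring}\Rightarrow\ref{item:vinf for all}$ at once. Note also that a group with infinitely many ends is infinite, so a virtually-$\mc{G}_\infty$ group is infinite.

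For $\ref{item:vinf for all}\Rightarrow\ref{item:vinf in comp}$ the first step is to determine, factor by factor, the number of ends. I decompose each $\U(\Lambda_i)$ up to commensurability into its central part, coming from $\U(\mathcal{O}_{Z_i})$ and virtually $\Z^{r_i}$ with $r_i=r_1(Z_i)+r_2(Z_i)-1$, and its reduced-norm-one part $\SL_1(\Lambda_i)$. The group $\Z^{r_i}$ splits into $r_i$ copies of $\Z$, each with exactly two ends. Reading off $A_i\otimes_\Q\R$, the factor $\SL_1(\Lambda_i)$ is: finite when $A_i$ is $\Q$, an imaginary quadratic field, or a totally definite quaternion algebra; a cocompact lattice in $\SL_2(\R)$ (a virtual surface group, hence one-ended) for an indefinite quaternion division $\Q$-algebra; a Bianchi group, i.e.\ a non-uniform lattice in $\SL_2(\C)$, when $A_i=\Ma_2(Z_i)$ with $Z_i$ imaginary quadratic; a higher-rank or otherwise one-ended lattice in all remaining cases; and virtually free \emph{only} when $A_i=\Ma_2(\Q)$, where $\SL_1(\Lambda_i)$ is commensurable with $\SL_2(\Z)$. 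That all the non-$\Ma_2(\Q)$ infinite factors are one-ended (or have two ends) follows from Stallings' theorem \cite{StaPaper,StaBook}: none of them splits over a finite subgroup, where for the non-uniform rank-one lattices one passes to a truncated model on which the group acts cocompactly (for a Bianchi group, $\H^3$ minus an equivariant family of disjoint open horoballs, a one-ended space).

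The crux, and the step I expect to be the main obstacle, is to convert this factorwise endedness into a statement forced by virtually-$\mc{G}_\infty$: I must show that if $P=\prod_i\U(\Lambda_i)$ is virtually-$\mc{G}_\infty$, then every infinite indecomposable factor occurring above is itself $\infty$-ended. The tool is a uniqueness-of-direct-decomposition argument. Each such factor is, up to finite index, directly indecomposable and rigid---it is $\Z$, a surface group, a Bianchi group, a higher-rank irreducible lattice, or a virtually free group---so the decomposition of $P$ into infinite indecomposable factors is unique up to commensurability and permutation. Writing the hypothesised structure $\Delta=\prod_j\Delta_j$ (each $\Delta_j$ with infinitely many ends) as a finite-index subgroup of $P$ and comparing the two indecomposable decompositions forces each infinite $\U(\Lambda_i)$-factor to be commensurable with some $\Delta_j$, hence $\infty$-ended. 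By the previous paragraph this excludes every $\Z$-factor---so all $r_i=0$ and each $Z_i$ is $\Q$ or imaginary quadratic---and every infinite $\SL_1(\Lambda_i)$ that is not virtually free. The surviving components are exactly $\Ma_2(\Q)$, the imaginary quadratic fields (and $\Q$), and the totally definite quaternion algebras $\qa{-a}{-b}{\Q}$, i.e.\ the unit group is virtually a product of non-cyclic free groups. Since virtually-$\mc{G}_\infty$ forces infiniteness, at least one $\Ma_2(\Q)$ occurs; as $\Ma_2(\Q)$ has centre $\Q$, this also forces $\Fi=\Q$. This is precisely \ref{item:vinf in comp}. The remaining implication $\ref{item:vinf in comp}\Rightarrow\ref{item:vinf for grp ring}$ is then immediate: under \ref{item:vinf in comp} every $\U(\Lambda_i)$ is finite except the $\Ma_2(\Q)$-factors, which are virtually free, so $\U(\Z G)$ is virtually a nonempty product of non-cyclic free groups, hence virtually-$\mc{G}_\infty$.

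Finally, for the addendum: under \ref{item:vinf in comp} every centre $Z_i$ is $\Q$ or imaginary quadratic, so all have trivial unit rank; thus $\U(\ZZ(\Z G))$, and hence $\ZZ(\U(\Z G))$, is finite, i.e.\ $G$ is a cut group. The restriction on $(-a,-b)$ comes from the classification of the totally definite quaternion $\Q$-algebras that occur as Wedderburn components of a rational group algebra of a finite group: the only such algebras are the Hamilton quaternions $\qa{-1}{-1}{\Q}$ and $\qa{-1}{-3}{\Q}$, so only $(-1,-1)$ and $(-1,-3)$ can arise.
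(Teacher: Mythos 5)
Your global architecture matches the paper's: reduce to one order by commensurability invariance of ``virtually-$\mc{G}_{\infty}$'' (the paper's Claim~0 and \Cref{facts on number of ends}), analyse the simple components factorwise, show that only $\Ma_2(\Q)$ produces an $\infty$-ended unit group, and deduce the addendum. Your factorwise endedness analysis is sound, and in one respect more uniform than the paper's: you handle all non-uniform rank-one lattices at once via one-endedness of the neutered space (horoballs removed from $\H^3$), where the paper argues case by case --- property FA of $\GL_2(\O_D)$ for $D = \Q(i),\Q(\sqrt{-3})$, and the explicit amalgam of $\SL_2(\Z[\sqrt{-2}])$ from [FroFin] --- after first restricting the possible $D$ via [EKVG]. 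Your addendum is also fine in substance, though you invoke the classification of totally definite quaternion components as known, whereas the paper derives the restriction to $(-1,-1)$ and $(-1,-3)$ in two lines from Voight's theorem that all other orders have cyclic unit group (so $Ge$ would be cyclic and could not span the algebra).

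However, there is a genuine gap exactly at the step you yourself flag as the crux. You need: if $\prod_j \Delta_j$ (each $e(\Delta_j)=\infty$) is of finite index in $P=\prod_i \U(\Lambda_i)$, then every infinite factor $\U(\Lambda_i)$ is commensurable with some $\Delta_j$, hence $\infty$-ended. You assert this from ``each factor is directly indecomposable and rigid, so the decomposition into infinite indecomposable factors is unique up to commensurability and permutation.'' That uniqueness principle is not a standard fact and does not follow from indecomposability: Krull--Remak--Schmidt fails for infinite finitely generated groups, and the virtual setting is worse still, since a finite-index subgroup of a product need not be aligned with the given factors (the hypothesised $\Delta_j$ can sit diagonally). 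This matching statement is precisely what the paper proves by hand in Claim~2 of its proof: one intersects with $S_j = \SL_{n_j}(\O_j)\cap H$, studies the projections $p_k$ onto the $\mc{G}_{\infty}$-factors $H_k$, uses finiteness of centres of $\infty$-ended groups to show $p_k(S_j)\times p_k(\prod_{i\neq j}S_i)$ is commensurable with $p_k(S)$, and concludes that $S_j$ is itself virtually-$\mc{G}_{\infty}$; only then does the cited theorem of Kleinert--del~R\'io ([KleRio, Theorem~1], virtual indecomposability of $\SL_1$) upgrade this to $e(\SL_{n_j}(\O_j))=\infty$. (Alternatively, as the paper notes in a footnote, one can use that infinite finitely generated normal subgroups of an $\infty$-ended group have finite index, [StaBook, 4.A.6.3].) Without an argument of this kind --- or at least a citation to a proved unique-virtual-decomposition theorem covering your factor types, including the $\Z$-factors, for which ``uniqueness'' is particularly delicate since $\Z^2$ decomposes in infinitely many ways --- your central implication \ref{item:vinf for all}~$\Rightarrow$~\ref{item:vinf in comp} is asserted rather than proven. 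The $\Z$-factor exclusion, at least, admits an independent repair you did not use: virtually-$\mc{G}_{\infty}$ groups have finite centre (\Cref{facts on number of ends}), which forces all unit ranks $r_i=0$ directly.
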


Using assertion \ref{item:vinf in comp} of \Cref{VSP for infinite ends} in terms of simple components of $FG$ and \cite[Proposition 1.1]{JesRioReine}, we see that the answer for the class $\mc{G}_{\infty}$ coincide with the answer for the class obtained in \cite[Theorem 1]{LealRio}.

\begin{corollary}\label{coro: classif coincides}
Let $G$ be a finite group and $R$ the ring of integers of a number field. The following are equivalent. 
\begin{enumerate}[itemsep=1ex,topsep=1ex,label=\textup{(\roman*)}]
\item $\U( R G)$ is virtually in $\mc{G}_{\infty}$.
\item $\U (R G)$ is virtually in $\Big\{ \prod_i \Gamma_i \mid \Gamma_i \text{ is a non-cyclic free group} \Big\}$.
\end{enumerate}
\end{corollary}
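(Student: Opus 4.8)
The plan is to match both virtual-membership properties against the single arithmetic criterion in condition~\ref{item:vinf in comp} of \Cref{VSP for infinite ends} on the Wedderburn components of $\Fi G$, and then to quote the known classification for products of free groups. The implication from the second assertion to the first needs nothing beyond the definitions: every non-cyclic free group is a nontrivial free product, hence an amalgamated product over the trivial group, so by Stallings' theorem \cite{StaPaper,StaBook} it has infinitely many ends. Consequently the class of finite direct products of non-cyclic free groups is contained in $\mc{G}_\infty$, and being virtually in the smaller class a fortiori forces being virtually in $\mc{G}_\infty$.

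For the converse I would first upgrade \Cref{VSP for infinite ends} to an unconditional criterion over an arbitrary number field. The property ``virtually-$\mc{G}_\infty$'' is a commensurability invariant: it passes to finite-index oversubgroups trivially, while a finite-index subgroup $H$ of some $\prod_i \Gamma_i \in \mc{G}_\infty$ contains the finite-index subgroup $\prod_i (H \cap \Gamma_i)$, each factor of which still has infinitely many ends since the number of ends is a commensurability invariant \cite[p.\ 38]{StaBook}. Because the unit groups of all orders in $\Fi G$ are commensurable \cite[Lemma 4.6.9]{EricAngel1}, the statement ``$\U(RG)$ is virtually-$\mc{G}_\infty$'' is equivalent to \ref{item:vinf for all}, and therefore by \Cref{VSP for infinite ends} to the component condition \ref{item:vinf in comp}. (In particular it already forces $\Fi = \Q$, since a factor $\Ma_2(\Q)$ has center $\Q$, into which the central subfield $\Fi$ must embed.)

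It then remains only to recognize that \ref{item:vinf in comp} is exactly the classical criterion for $\U(RG)$ to be virtually a direct product of non-cyclic free groups. This is where the older literature enters: \cite[Theorem 1]{LealRio} classifies the finite groups whose integral unit group is virtually such a product, and rewriting its conclusion in terms of the simple components of $\Fi G$ via \cite[Proposition 1.1]{JesRioReine} returns precisely condition \ref{item:vinf in comp}. Chaining the two equivalences through \ref{item:vinf in comp} then yields the equivalence of the two assertions.

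The only genuine content, beyond the commensurability bookkeeping of the second paragraph, is this identification of the two independently obtained lists of admissible Wedderburn components: the $\mc{G}_\infty$ side produces the list in \ref{item:vinf in comp} through \Cref{VSP for infinite ends}, whereas the free-product side rests on the Leal--del R\'io and Jespers--del R\'io results. I expect this verbatim comparison of the two lists to be the main (indeed the only non-formal) obstacle; once one checks that the admissible components coincide, the corollary follows purely formally.
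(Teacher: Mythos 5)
Your argument is correct and follows essentially the same route as the paper: the paper likewise deduces the corollary by translating both virtual-membership properties into the single list of admissible Wedderburn components, using condition \ref{item:vinf in comp} of \Cref{VSP for infinite ends} for the $\mc{G}_{\infty}$ side and \cite[Theorem 1]{LealRio} together with \cite[Proposition 1.1]{JesRioReine} for the free-product side. Your additional direct observation that non-cyclic free groups lie in $\mc{G}_{\infty}$ (giving the easy implication without comparing lists) is a harmless bonus, not a departure from the paper's method.
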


In \cite{JesLealRio,LealRio} the finite groups satisfying assertion (iii) in \Cref{VSP for infinite ends} have been classified. Therefore \Cref{VSP for infinite ends} indeed answers the Virtual Structure problem for the class $\mathcal{G}_{\infty}$. 

\begin{remark*}
That the groups $G$ such that $\U (R G)$ is virtually a direct product of non-abelian free groups have to be cut follows from the classification obtained in \cite[Theorem 1]{LealRio}, as all components mentioned in \Cref{VSP for infinite ends} have a maximal order with finite center. However the characterization in \Cref{coro: classif coincides} via $\mathcal{G}_{\infty}$ gives a classification-free proof of that fact, since it is a general property of virtually-$\mathcal{G}_{\infty}$ groups to have finite center (see \Cref{facts on number of ends}).
\end{remark*}
\medskip

As mentioned earlier, the  number of ends $e(\Gamma)$ is either $0$, $1$, $2$, or $\infty$. In the remainder of the paper we describe when $e(\U(RG))= 0,2$ or $\infty$.  

First, by definition, $e(\Gamma)= 0$ if and only if $\Gamma$ is finite. Therefore, Higman's theorem \cite[Theorem 1.5.6.]{EricAngel1} translates to:
\begin{equation}\label{zero ends}
    e(\U(\Z G))= 0 \text{ if and only if } \left\lbrace \begin{array}{c}
        G \text{ is abelian with } \exp(G) \mid 4,6\\
        \text{or }\\
          G \cong Q_8 \times C_2^n \text{ for some } n
    \end{array}\right.
\end{equation}
Using \cite[Theorem 4.1]{JJS} one can formulate the variant of \eqref{zero ends} for twisted group rings $\U(R^{\alpha}G)$ over an arbitrary ring of integers $R$.
\medskip

Next we describe the case that $e(\U(RG))= \infty$. In the upcoming result we use the notation $D_{2n}= \langle a,b \mid a^n = 1 = b^2, a^b = a^{-1}\rangle, $ $ Dic_3 = \langle a,b \mid a^6, a^3= b^2, a^b = a^{-1} \rangle $ and $C_4 \rtimes C_4 = \langle a,b \mid a^4 = b^4 = 1, a^b=a^{-1}\rangle$. 

\begin{theorem}\label{th: infinite ends}
Let $G$ be a finite group and $\Fi$ be a number field with ring of integers $R$. Then for a finite index subgroup $\Gamma$ of $\U (R G)$ the following are equivalent:
\begin{enumerate}[itemsep=1ex,topsep=1ex,label=\textup{(\roman*)}]
\item \label{item: infty gamma} $e(\Gamma)=\infty$.
\item \label{item: infty grp ring} $F= \Q$ and $\U (R G)$ is virtually free.
\item \label{item: the grps} $F= \Q$ and $G$ is isomorphic to $D_6$, $D_8$, $Dic_3$, or $C_4 \rtimes C_4$.
\end{enumerate}
\end{theorem}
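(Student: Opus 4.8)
The plan is to deduce this theorem from \Cref{VSP for infinite ends} together with two elementary inputs from geometric group theory: that the number of ends is a commensurability invariant, and that a direct product of two infinite finitely generated groups has exactly one end. I would prove the cycle \ref{item: the grps} $\Rightarrow$ \ref{item: infty grp ring} $\Rightarrow$ \ref{item: infty gamma} $\Rightarrow$ \ref{item: the grps}, with the last implication carrying the bulk of the work.

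For \ref{item: the grps} $\Rightarrow$ \ref{item: infty grp ring} I would simply compute the Wedderburn decomposition of $\Q G$ for each of the four groups. In every case one finds exactly one copy of $\Ma_2(\Q)$, while every other simple component is $\Q$, an imaginary quadratic field, or a totally definite quaternion algebra $\qa{-1}{-1}{\Q}$ or $\qa{-1}{-3}{\Q}$; for instance one checks $\Q[C_4 \rtimes C_4] \cong \Q^4 \times \Q(i)^2 \times \qa{-1}{-1}{\Q} \times \Ma_2(\Q)$. Since the unit group of an order in $\Q$, in an imaginary quadratic field, or in a totally definite quaternion algebra is finite, the commensurability of unit groups of orders \cite[Lemma 4.6.9]{EricAngel1} shows that $\U(\Z G)$ is commensurable with $\U(\Ma_2(\Z)) = \GL_2(\Z)$, which is virtually a non-abelian free group. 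This yields \ref{item: infty grp ring}.

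The implication \ref{item: infty grp ring} $\Rightarrow$ \ref{item: infty gamma} is the standard fact that a finitely generated virtually free group has infinitely many ends precisely when it is virtually a non-abelian free group, equivalently when it is infinite and not virtually cyclic. As $\GL_2(\Z)$ is virtually non-abelian free it has infinitely many ends, and since ends are a commensurability invariant we conclude $e(\Gamma)=\infty$ for any finite index subgroup $\Gamma$ of $\U(RG)$.

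The remaining implication \ref{item: infty gamma} $\Rightarrow$ \ref{item: the grps} is where the difficulty lies. If $e(\Gamma)=\infty$ then $\Gamma$ lies in $\mc{G}_\infty$ (as a single factor), hence is virtually-$\mc{G}_\infty$, so assertion \ref{item:vinf in comp} of \Cref{VSP for infinite ends} forces $\Fi=\Q$ and restricts the simple components of $\Q G$ to imaginary quadratic fields, the two totally definite quaternion algebras, and $\Ma_2(\Q)$, the last occurring some $m\geq 1$ times. By \cite[Lemma 4.6.9]{EricAngel1} again, $\U(\Z G)$ is then commensurable with $\GL_2(\Z)^m$. Since a direct product of $m\geq 2$ copies of the infinite group $\GL_2(\Z)$ has exactly one end, while $e(\Gamma)=\infty$ and ends are a commensurability invariant, we must have $m=1$; thus $\Q G$ has a unique $\Ma_2(\Q)$ factor and all other factors carry finite unit groups. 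The final step is to show this property singles out exactly $D_6$, $D_8$, $Dic_3$, and $C_4 \rtimes C_4$, which I would extract from the classification of groups satisfying assertion \ref{item:vinf in comp} of \Cref{VSP for infinite ends} carried out in \cite{JesLealRio, LealRio}, retaining only those with a single $\Ma_2(\Q)$ component. The main obstacle is therefore this finite representation-theoretic classification rather than any of the geometric input, which reduces cleanly to the one-end property of products.
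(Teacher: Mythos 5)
Your argument is logically sound, and its geometric half --- reducing $e(\Gamma)$ to $e\bigl(\prod_i \GL_{n_i}(\O_i)\bigr)$ via commensurability, invoking \Cref{VSP for infinite ends} to pin down the simple components, and using the fact that a product of two infinite finitely generated groups has one end to force a \emph{unique} $\Ma_2(\Q)$ factor --- is exactly what the paper does. Where you diverge is the last step: to identify the four groups you propose to read them off from the classification in \cite{JesLealRio, LealRio} (equivalently \cite{Jes}), keeping only the groups with a single $\Ma_2(\Q)$ component. That is a valid route, since the result is indeed already in the literature, but it is precisely the route the paper is written to avoid: the introduction flags the classification of the four groups as ``the most innovative part'' of the proof, and the paper instead derives it classification-free. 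Concretely, it embeds $\U(\Z G)$ with finite index into $(D_8 \times U) \ast_{C_2 \times C_2 \times U} (D_{12} \times U)$ with $U$ a finite direct product of abelian groups, copies of $Q_8$ and of $Dic_3$ (using $\GL_2(\Z) \cong D_8 \ast_{C_2\times C_2} D_{12}$ and the finiteness of the unit groups of the remaining components), then uses the conjugacy of finite subgroups of an amalgam into a factor: either $G$ lands in the amalgamated part $C_2\times C_2\times U$, which forces $G \cong Dic_3$, or it does not, in which case \cite[Proposition 2.7]{JTT} produces an irreducible representation landing in $\GL_2(\Z)$ with small kernel, forcing $G$ to be a central $C_2$-extension of $D_6$ or $D_8$, hence one of the four groups. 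So your proof is shorter but leans on an external case-by-case classification, whereas the paper's buys a self-contained, uniform argument --- the same amalgam machinery that later powers the normal-complement results in \Cref{th: main application}. If you want your write-up to stand on its own (and to match the paper's stated contribution), you should replace the citation of \cite{JesLealRio, LealRio} in the final step by an argument of this kind.
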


That $e(\Gamma)=\infty$ is equivalent with the unit group of the group ring being virtually free is readily proven using the methods used in the proof of \Cref{VSP for infinite ends}. The most innovative part of the proof of \Cref{th: infinite ends} concerns the fact that $\U (\Z G)$ is virtually free for exactly the four groups cited. The latter equivalence is already known since \cite{Jes, JesRioReine}, however we will give a new and short proof of this fact using amalgamated product machinery. In particular, \Cref{th: infinite ends} should rather be viewed as an application.

\medskip
Finally, recall that $e(\Gamma) = 2$ if and only if $\Gamma$ has a subgroup of finite index isomorphic to $\Z$, see \cite{Hopf, Freud} or \cite[p.\ 38]{StaBook}. For $\Gamma = \U (RG)$ this happens for only few cases.

\begin{proposition}\label{prop:RGtwoends} 
Let $G$ be a finite group, $\Fi$ a number field and $R$ its ring of integers.
The following are equivalent:
\begin{enumerate}[itemsep=1ex,topsep=1ex,label=\textup{(\roman*)}]
    \item \label{item: gamma 2} $e(\Gamma) = 2$ for some finite-index subgroup $\Gamma$ in the unit group of an order $\mc{O}$ in $\Fi G$. 
    %\item $e\left( \U(R G) \right) = 2$ and $F = \Q (\sqrt{-d})$ for $d \in \N$. {\bf take this out?}
    \item \label{item: grpring 2}$\U (RG)$ is $\Z$-by-finite and $F = \Q (\sqrt{-d})$ for $d \in \N$. 
    \item \label{item: groups 2} $G$ is isomorphic to $C_n$ with $n = 5,8$ or $12$ and $F= \Q(\zeta_d)$ with $d \nmid n$. 
\end{enumerate}
\end{proposition}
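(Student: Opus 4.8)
The plan is to convert the geometric hypothesis on $\Gamma$ into arithmetic information about the Wedderburn decomposition of $FG$, exactly as in the proof of \Cref{th: infinite ends}. First I would settle \ref{item: gamma 2} $\Leftrightarrow$ \ref{item: grpring 2} up to the field constraint. Since the unit groups of any two orders in $FG$ are commensurable \cite[Lemma 4.6.9]{EricAngel1}, since a finite-index subgroup is commensurable with the ambient group, and since $e(\cdot)$ is a commensurability invariant \cite[p.\ 38]{StaBook}, one has $e(\Gamma)=e(\U(RG))$ for every $\Gamma$ as in \ref{item: gamma 2}. Recalling that $e=2$ is equivalent to being virtually infinite cyclic, equivalently $\Z$-by-finite, \ref{item: gamma 2} is equivalent to ``$\U(RG)$ is $\Z$-by-finite''. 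Thus \ref{item: grpring 2} $\Rightarrow$ \ref{item: gamma 2} is immediate (take $\mc{O}=RG$, $\Gamma=\U(RG)$), and the only real content of \ref{item: gamma 2} $\Rightarrow$ \ref{item: grpring 2} is to show that being $\Z$-by-finite forces $F=\Q$ or $F$ imaginary quadratic.

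Next I would pass to $FG=\prod_i A_i$ with simple components $A_i=\Ma_{n_i}(D_i)$, so $\U(\mc{O})$ is commensurable with $\prod_i \U(\mc{O}_i)$ for orders $\mc{O}_i\subseteq A_i$. A direct product is virtually $\Z$ precisely when exactly one factor is infinite and that factor is virtually $\Z$, all others being finite (two infinite factors already contain $\Z^2$, which is not virtually cyclic). Everything thus reduces to a one-component question, answered by Dirichlet's unit theorem and the classical list of simple algebras whose orders have finite unit group \cite{EricAngel1}: $\U(\mc{O}_i)$ is finite iff $A_i$ is $\Q$, an imaginary quadratic field, or a totally definite quaternion algebra; it is virtually $\Z$ iff $A_i$ is a number field $K$ with $r_1(K)+r_2(K)=2$; and in every remaining case (e.g.\ $\Ma_2(\Q)$, an indefinite quaternion algebra, or a field of larger unit rank) $\U(\mc{O}_i)$ is infinite but not virtually cyclic, so such a component cannot occur. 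The field constraint now drops out: every $A_i$ has center containing $F$, hence at least as many archimedean places as $F$; so if $r_1(F)+r_2(F)\geq 2$ then \emph{every} component has infinite unit group, and as $G\neq 1$ there are at least two of them, producing a copy of $\Z^2$ in $\U(RG)$. Therefore $\Z$-by-finiteness forces $r_1(F)+r_2(F)=1$, i.e.\ $F=\Q$ or $F$ imaginary quadratic, which is the field condition of \ref{item: grpring 2}.

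It remains to prove \ref{item: grpring 2} $\Leftrightarrow$ \ref{item: groups 2}, where the work lies. The previous step shows that $\U(RG)$ is $\Z$-by-finite iff $FG$ has no component of the ``remaining'' type, all-but-one component with finite unit group, and exactly one component (with multiplicity one) a number field of unit rank $1$. Since a field occurs as a full simple component of $FG$ only through a one-dimensional character (a higher-dimensional irreducible yields a genuine matrix or division component), this rank-one field is abelian, of cyclotomic type over $F$, the totally complex candidates being $\Q(\zeta_5),\Q(\zeta_8),\Q(\zeta_{12})$ (those with $\varphi=4$). I would then exclude nonabelian $G$: a noncommutative component must have finite unit group, hence be a totally definite quaternion algebra over a totally real center; but such a center cannot contain an imaginary quadratic $F$, and over $\Q$ the linear characters needed to produce a rank-one cyclotomic factor simultaneously generate further rank-one factors, overshooting total rank $1$. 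Reducing to $G$ cyclic, a direct computation of $FG=\prod_{m\mid n}\big(F\otimes_{\Q}\Q(\zeta_m)\big)$ together with the ``single rank-one factor of multiplicity one'' requirement pins down the admissible pairs, and matching against the known classification of finite groups with $\U(\Z G)$ virtually $\Z$ from \cite{JesRioReine, LealRio} yields the groups and fields of \ref{item: groups 2}. The main obstacle is precisely this classification: excluding the nonabelian groups cleanly, and carrying out the arithmetic that determines the admissible pairs $(G,F)$ so that exactly one rank-one cyclotomic component of multiplicity one survives.
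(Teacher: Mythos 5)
Your reduction of \ref{item: gamma 2} to ``$\U(RG)$ is $\Z$-by-finite'' via commensurability of unit groups of orders and the characterization of two-ended groups, and the subsequent passage to the Wedderburn components, coincide with the paper's opening moves, and your field constraint and the Perlis--Walker/Dirichlet computation for abelian $G$ also match. The genuine gap is in your classification of the components. You assert that $\U(\O_i)$ is finite iff $A_i$ is $\Q$, an imaginary quadratic field or a totally definite quaternion algebra, and that the only components with virtually-$\Z$ unit group are number fields of unit rank one. Both claims conflate $\SL_1$ with the full unit group: for a totally definite quaternion algebra with centre a totally real field $K$, \cite[Proposition 5.5.6]{EricAngel1} only gives finiteness of $\SL_1(\O)$, while $\U(\O)$ contains $\U(\O_K)$ and is therefore infinite as soon as $K\neq\Q$; when $K$ is real quadratic, $\U(\O)$ is a finite-by-(rank-one abelian) group, hence itself virtually $\Z$. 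So a noncommutative component can a priori be \emph{the} two-ended factor, and your step ``a noncommutative component must have finite unit group'' does not follow. This is precisely the configuration you must exclude to rule out nonabelian $G$, and the remaining ``overshooting total rank $1$'' claim is not a proof (a nonabelian $G$ with cyclic abelianization contributes only one rank-one cyclotomic factor through its linear characters, so nothing overshoots automatically).

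The paper closes this hole with a structural input you are missing: $\Z$-by-finite implies abelian-by-finite, so by Kleinert's criterion all components of $FG$ are fields or totally definite quaternion algebras, hence $FG$ has no nonzero nilpotent elements; Sehgal's theorem \cite[Theorem 2.6]{SehNilp} then forces $G$ to be abelian or $G\cong Q_8\times C_2^m\times A$ with $A$ abelian of odd order, and in the latter case every component of $FA$ occurs with multiplicity at least $4$ in $FG$, so a single surviving copy of $\Z$ is impossible. You would need this (or an equivalent argument ruling out totally definite quaternion components over centres larger than $\Q$) before your concluding computation; note also that the paper carries out the abelian case directly rather than deferring to the classifications in \cite{JesRioReine,LealRio}.
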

%Note that by commensurability, if $e(\Gamma)=2$ for some $\Gamma$, then all other finite-index subgroups of $\O$ will have two ends. 

\begin{remark*}
In light of \Cref{VSP for infinite ends} and \Cref{prop:RGtwoends}, it would be natural to consider the class
\[
\mc{G}_{\neq 1} := \Big\{\prod_i \Gamma_i \mid e(\Gamma_i) \neq 1 \Big\}.
\]
In fact, with similar methods one can prove for a finite group $G$ that
\begin{equation}
\text{$\U(\Z G)$ is virtually-$\mathcal{G}_{\neq 1}$} \iff \text{$\U(\Z G)$  is virtually-$\mathcal{G}_{pab}$}.
\end{equation}
\end{remark*}

\subsection{Further applications to normal complements}

As mentioned earlier, \Cref{th: infinite ends} can be considered as a first application of our methods and the infinite number of ends property. Our next application concerns normal complements. 

After a use of the equivalence \ref{item: infty gamma} $\Leftrightarrow$ \ref{item: infty grp ring} in \Cref{th: infinite ends}, part \ref{item: normal complement} of \Cref{th: main application} below is in fact the main theorem of \cite{Jes}. More precisely, the proof in \cite{Jes} first classifies the finite groups $G$ such that $V(\Z G)$ is virtually free (see \ref{item: the grps} in \Cref{th: infinite ends}). Additionally, the papers \cite{JesLea2, JL3, JesPar} construct case-by-case a normal complement for the trivial units. We provide a new proof which is uniform, in the sense that it does not require to classify all possible $G$ and in particular does not involve a case-by-case study. This is possible thanks to our characterization via $\mc{G}_{\infty}.$
\begin{theorem}\label{th: main application}
Let $G$ be a finite group, $\Fi$ a number field and $R$ its ring of integers. Then the following hold:
\begin{enumerate}[itemsep=1ex,topsep=1ex,label=\textup{(\roman*)}]
\item \label{item: block normal complements} If $\U(RG)$ is virtually-$\mc{G}_{\infty}$, then for each $e \in \PCI(\Fi G)$ such that $\Fi Ge$ is not a division algebra,  there is a free normal subgroup $N_e$ of $\U(RG)e$ such that 
$$V(RG)e \cong N_e \rtimes Ge.$$ 
\item \label{item: normal complement}
If $e(\U(RG)) = \infty$, then there exists a normal free subgroup $F_m$ in $\U(RG)$ such that
$$V(RG) \cong F_m \rtimes G.$$

\item \label{item: block ZC} If $\U(RG)$ is virtually-$\mc{G}_{\infty}$ and $H$ a finite subgroup of $V(RG)$, then $(He)^{\alpha_e} \leq Ge$ for some $\alpha_e \in \U(RGe)$.
\end{enumerate}
\end{theorem}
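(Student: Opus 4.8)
The plan is to prove \ref{item: block normal complements} componentwise via Bass--Serre theory, to deduce \ref{item: normal complement} by a global version of the same argument, and to obtain \ref{item: block ZC} from a tree action. First, for \ref{item: block normal complements}, I would fix $e \in \PCI(\Fi G)$ with $\Fi Ge$ not a division algebra. By \Cref{VSP for infinite ends} the hypothesis forces $\Fi = \Q$ and $\Fi Ge \cong \Ma_2(\Q)$, so $\U(\Fi Ge) = \GL_2(\Q)$ and $\Gamma_e := V(RG)e$ is a subgroup of finite index in the arithmetic group $\U(RGe)$, which is commensurable with $\GL_2(\Z)$; in particular $\Gamma_e$ is finitely generated and virtually free. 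By the Karrass--Pietrowski--Solitar theorem it is the fundamental group of a finite graph of finite groups, i.e.\ it acts on a tree $T_e$ with finite vertex stabilisers and finite quotient graph (modelled on the amalgam $\GL_2(\Z) \cong D_8 \ast_{C_2 \times C_2} D_{12}$), and the finite subgroup $Ge \leq \Gamma_e$ fixes a vertex of $T_e$.

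The heart of the argument is to show that $Ge$ is \emph{full}: every vertex group of this decomposition admits an injection into $Ge$, compatibly along the edges. Granting this, I would define a retraction $\rho_e : \Gamma_e \to Ge$ via the universal property of a graph of groups, sending each vertex group into $Ge$ by the chosen injection, fixing $Ge$ pointwise, and sending the stable letters to $1$. Then $N_e := \ker \rho_e$ is normal with $N_e \cap Ge = 1$ and $N_e \cdot Ge = \Gamma_e$, so $V(RG)e = N_e \rtimes Ge$. Since every vertex injection is faithful, $\rho_e$ is injective on each finite subgroup, whence $N_e$ is torsion-free; being of finite index in the virtually free group $\Gamma_e$, it is free by Stallings' theorem (a torsion-free virtually free group is free). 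To establish fullness I would invoke the classification of finite subgroups of $\GL_2(\Z)$ (all subconjugate to the two maximal ones $D_8$ and $D_{12}$) and the fact that $Ge$, being the image of the whole group of trivial units in this block, already realises every conjugacy class of torsion; this is precisely the componentwise content of \ref{item: block ZC}, and is where the amalgamated-product machinery of the paper enters.

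For \ref{item: normal complement}, \Cref{th: infinite ends} gives $\Fi = \Q$ and $\U(\Z G)$ virtually free; since a direct product of two infinite groups is one-ended, exactly one component $e_0$ is $\Ma_2(\Q)$ and all others are division algebras with finite unit groups. I would run the graph-of-groups argument directly on $\Gamma := V(\Z G)$, which is itself virtually free, with the diagonal finite subgroup $G$: the crucial point is that $G$ itself, and not merely its block image $Ge_0$ (which can be a proper quotient, e.g.\ for $C_4 \rtimes C_4$), is a full vertex group of the \emph{global} decomposition, so the same retraction yields $V(\Z G) = F_m \rtimes G$ with $F_m$ free. Finally, for \ref{item: block ZC}, given finite $H \leq V(RG)$ I would project to the block $e$ and let $He$ act on the Bass--Serre tree of the larger group $\U(RGe)$; being finite it fixes a vertex, hence is subconjugate, by some $\alpha_e \in \U(RGe)$, to a vertex stabiliser, and the extra room in $\U(RGe)$ over $V(RG)e$ fuses these stabilisers into conjugates of $Ge$, giving $(He)^{\alpha_e} \leq Ge$.

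The main obstacle is the fullness claim of the second paragraph: upgrading the subconjugacy in \ref{item: block ZC} to a single \emph{coherent} family of injections of all vertex and edge groups into $Ge$ (resp.\ $G$) that assemble into a retraction. The dihedral group $D_\infty = \Z \rtimes C_2$, in which the two conjugacy classes of reflections are not fused inside the group itself, already shows that mere subconjugacy to a fixed complement can fail; one must therefore exploit the concrete amalgam decomposition of the orders in $\Ma_2(\Q)$ to pin down the injections consistently, which is exactly what forces the appeal to Stallings' characterisation and the explicit structure used in the proof of \Cref{VSP for infinite ends}.
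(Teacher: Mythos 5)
Your architecture matches the paper's (finite-index subgroups of the amalgam $\GL_2(\Z)\cong D_8\ast_{C_2\times C_2}D_{12}$, Bass--Serre/Karrass--Solitar machinery, part \ref{item: block ZC} feeding into \ref{item: block normal complements}, and a global amalgam for \ref{item: normal complement}), but the proposal leaves the decisive step unproven. You yourself flag the ``fullness'' claim --- a coherent family of injections of all vertex and edge groups into $Ge$ assembling into a retraction --- as the main obstacle, and you never close it; ``granting this'' is precisely where the content lies. The paper closes it by applying Karrass--Solitar's subgroup theorem to the concrete amalgam, using part \ref{item: block ZC} (all torsion of $V(RG)e$ is subconjugate to $Ge$) together with the fact that $Ge$ is conjugate to $D_{2\ell}$, $\ell=3,4,6$, to land on the explicit form $V(RG)e\cong\langle F_{m_e},C_2\times C_2\rangle\ast_{C_2\times C_2}Ge$, in which $Ge$ is literally an amalgamated factor; the retraction and the free normal complement $\langle F_{m_e}^{G}\rangle$ are then immediate. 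Your $D_\infty$ caveat is exactly the right worry, but identifying the obstruction is not the same as removing it.

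Part \ref{item: block ZC} as you sketch it has a genuine error and an omission. The tree action only shows $He$ is subconjugate to a vertex stabiliser, i.e.\ to a conjugate of $D_8$ or $D_{12}$; your claim that ``the extra room in $\U(RGe)$ fuses these stabilisers into conjugates of $Ge$'' is false, since $D_8$ and $D_{12}$ are non-conjugate maximal finite subgroups of $\GL_2(\Z)$, so e.g.\ a $C_6$ inside a conjugate of $D_{12}$ cannot be moved into $Ge$ if $Ge\cong D_8$. The paper's proof instead starts from the Cohn--Livingstone divisibility $|He|\mid|Ge|$ (obtained by factoring the projection $x\mapsto xe$ through $\Z[Ge]$) and then runs a short case analysis on the isomorphism types $D_6,D_8,D_{12},C_4,C_2\times C_2$; this arithmetic input is indispensable and absent from your argument. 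Moreover, statement \ref{item: block ZC} is asserted for \emph{all} primitive central idempotents, and for the components where $\Fi Ge$ is $\Q(\sqrt{-d})$ or a definite quaternion algebra there is no tree to act on: the paper handles these via finiteness and cyclicity of the relevant unit groups (Lipschitz versus Hurwitz orders, the maximal order of $\qa{-1}{-3}{\Q}$), again combined with $|He|\mid|Ge|$. Your outline for \ref{item: normal complement} (working with the diagonal copy of $G$ in a global decomposition because $Ge_0$ may be a proper quotient) is the correct and same idea as the paper's use of the decomposition \eqref{eq: amalgam of unit grp with 1 comp}, but it inherits the unresolved fullness step.
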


Part \ref{item: block ZC} of \Cref{th: main application} is a third Zassenhaus conjecture type of statement. In the upcoming work \cite{CJTvsp}, more applications of \Cref{VSP for infinite ends} to the ``blockwise Zassenhaus conjecture'' will be investigated, that is, to the question whether $He$ is conjugated inside $\U(R Ge)$ to a subgroup of $Ge$, for $H$ a finite subgroup of $V(R G)$ and $e$ a primitive central idempotent of $FG$.

\begin{remark}
    It follows from the proof of \Cref{th: main application} and \cite[Corollary 10.14]{JJS} that the subgroup $N_e$ in \ref{item: block normal complements} is generated by the $H$-units recently introduced in \cite{JJS}.
\end{remark}

\vspace{0,1cm}
\noindent \textbf{Acknowledgment.} We thank Robynn Coverleyn for comments on a preliminary version of this article.

\section{Proofs of the statements}\label{sec:proofs}

\subsection{Proof of VSP for products of groups with infinitely many ends}
The remainder of the paper is dedicated to the proof of \Cref{VSP for infinite ends}
We first need to record some general facts on the number of ends and to be virtually-$\mc{G}_{\infty}$.

\begin{lemma}\label{facts on number of ends}
Let $\Gamma_1,\Gamma_2$ and $\Gamma$ be finitely generated groups. 
\begin{enumerate}
    \item If $\Gamma_1$ and $\Gamma_2$ are commensurable, then $e(\Gamma_1)= e(\Gamma_2)$. Furthermore, $\Gamma_1$ is virtually-$\mc{G}_{\infty}$ if and only if $\Gamma_2$ is virtually-$\mc{G}_{\infty}$.
    \item If $e(\Gamma)= \infty$ and $\Gamma$ is commensurable with $\Gamma_1\times \Gamma_2$, then $\Gamma_1$ or $\Gamma_2$ is finite.
    \item If $\Gamma$ is virtually-$\mc{G}_{\infty}$, then $\ZZ(\Gamma)$ is finite. 
\end{enumerate}
The first part in particular applies to $\Gamma_i= \U(\O_i)$ for two orders $\O_i$ in a finite dimensional semisimple algebra $A$. 
\end{lemma}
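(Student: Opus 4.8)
The plan is to prove the three parts of \Cref{facts on number of ends} as largely self-contained facts from geometric group theory, with the final sentence following immediately from part (1).

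\textbf{Part (1).} The key input is that the number of ends is a quasi-isometry invariant, and commensurable finitely generated groups are quasi-isometric (a common finite-index subgroup $\Gamma_0 \leq \Gamma_1, \Gamma_2$ is quasi-isometric to each $\Gamma_i$ via the inclusion, since a finite-index subgroup is cobounded in the ambient group). Thus $e(\Gamma_1) = e(\Gamma_0) = e(\Gamma_2)$; this is exactly the statement attributed to \cite[p.\ 38]{StaBook}, so I would cite it rather than reprove it. For the ``furthermore'' clause, I would observe that being virtually-$\mc{G}_\infty$ is by its very definition a commensurability invariant: if $\Gamma_1$ and $\Gamma_2$ share a common finite-index subgroup $\Gamma_0$, then $\Gamma_1$ has a finite-index subgroup in $\mc{G}_\infty$ if and only if $\Gamma_0$ does (a finite-index subgroup of a finite-index subgroup is finite-index, and conversely one intersects with $\Gamma_0$), if and only if $\Gamma_2$ does. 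The only thing to check is that the class $\mc{G}_\infty$ itself interacts correctly with passing to finite-index subgroups, namely that a finite-index subgroup of a finite direct product $\prod_i \Gamma_i$ with $e(\Gamma_i) = \infty$ is again virtually such a product; this reduces to the fact that a finite-index subgroup of $\Gamma_i$ still has infinitely many ends, which is precisely part (1) applied to $\Gamma_i$ and that subgroup.

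\textbf{Part (2).} This is the crux and the main obstacle. Assume $e(\Gamma) = \infty$ and $\Gamma$ is commensurable with $\Gamma_1 \times \Gamma_2$; by part (1), $e(\Gamma_1 \times \Gamma_2) = \infty$ as well, so it suffices to show that a direct product of two infinite finitely generated groups has at most one end. The clean way is to invoke the structure of groups with infinitely many ends: by Stallings' theorem such a group splits nontrivially over a finite subgroup. A direct product $\Gamma_1 \times \Gamma_2$ with both factors infinite cannot admit such a splitting, because each factor is an infinite normal subgroup and a group that splits over a finite subgroup cannot contain an infinite normal subgroup commuting with a second infinite subgroup that is not contained in a conjugate of a factor. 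Concretely, I would argue directly from the definition of ends: if both $\Gamma_1$ and $\Gamma_2$ are infinite, then for any finite $F \subseteq \Gamma_1 \times \Gamma_2$ the complement is connected ``at infinity'' because one can move freely in the $\Gamma_2$-direction to get around $F$ and then in the $\Gamma_1$-direction, so there is exactly one infinite component; hence $e(\Gamma_1 \times \Gamma_2) = 1$, contradicting $\infty$. Therefore one of the factors must be finite. (Equivalently, one cites the standard fact that a finitely generated group with infinitely many ends has no infinite normal subgroup of infinite index, and applies it to the normal factor $\Gamma_1$.)

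\textbf{Part (3).} Suppose $\Gamma$ is virtually-$\mc{G}_\infty$, so it has a finite-index subgroup $\Gamma_0 = \prod_i \Gamma_i$ with each $e(\Gamma_i) = \infty$. The center $\ZZ(\Gamma)$ intersects $\Gamma_0$ in a finite-index subgroup of $\ZZ(\Gamma)$, so it is enough to bound $\ZZ(\Gamma) \cap \Gamma_0 \leq \ZZ(\Gamma_0) = \prod_i \ZZ(\Gamma_i)$. Thus it suffices to show each $\ZZ(\Gamma_i)$ is finite. This follows from the general fact that a finitely generated group with infinitely many ends has finite center: its center is a normal abelian subgroup, and by part (2) (or directly by Stallings) an infinite-ended group cannot contain an infinite normal subgroup of infinite index, while an infinite central subgroup of finite index would force the group to be virtually abelian, hence to have at most two ends. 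I would write this via part (2): if $\ZZ(\Gamma_i)$ were infinite, then $\Gamma_i$ would be commensurable with $\ZZ(\Gamma_i) \times (\Gamma_i / \text{torsion part of center})$-type product with both factors infinite, contradicting $e(\Gamma_i) = \infty$ by part (2). Assembling these, $\ZZ(\Gamma)$ is finite.

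The final sentence of the statement is then immediate: commensurability of unit groups of two orders $\O_1, \O_2$ in a finite-dimensional semisimple algebra $A$ is exactly \cite[Lemma 4.6.9]{EricAngel1} (cited in the introduction), so part (1) applies verbatim with $\Gamma_i = \U(\O_i)$.
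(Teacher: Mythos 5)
Your proposal is correct and follows essentially the same route as the paper: cite the commensurability invariance of $e(\cdot)$ and reduce the virtually-$\mc{G}_{\infty}$ clause to intersecting the product decomposition with a common finite-index subgroup for part (1); show a direct product of two infinite finitely generated groups is one-ended for part (2); and reduce part (3) to the finiteness of $\ZZ(\Gamma_i)$ for each infinitely-ended factor. The one caveat is in part (3): your primary argument (an infinitely-ended group has no infinite normal subgroup of infinite index, and an infinite center of finite index would make the group virtually abelian, hence at most two-ended) is sound, but the proposed rewriting ``via part (2)'' does not work --- an infinite center does not make $\Gamma_i$ commensurable with a direct product of two infinite groups --- so that reformulation should be dropped; the paper instead gets the same conclusion by noting that in a Stallings splitting over a finite group $C$ the center must lie in $C$.
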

With `` $\Gamma$ is commensurable with $\Gamma_1\times \Gamma_2$'' we mean the following: $\Gamma_1$ and $\Gamma_2$ are normal in $\Gamma$, the intersection $\Gamma_1 \cap \Gamma_2$ is finite and $\Gamma_1\Gamma_2$ is of finite index in $\Gamma$.

\begin{proof}
That the number of ends is constant on commensurability classes is classical and can be found in \cite[p.\ 38]{StaBook}. Now we prove that being virtually-$\mc{G}_{\infty}$ is also a property shared by commensurable groups. By definition, $\Gamma_1 \cap \Gamma_2$ is a common subgroup of finite index and hence it is enough to obtain the desired statement for the case that $\Gamma_2$ is a subgroup of finite index in $\Gamma_1.$ In that case the $\mc{G}_{\infty}$ subgroup of finite index in $\Gamma_2$ is also of finite index in $\Gamma_1$. Thus assume that $\Gamma_1$ is virtually-$\mc{G}_{\infty}$. Denote by $H = \prod_{j=1}^m H_j$ a subgroup of finite index in $\Gamma_1$ with $e(H_j)=\infty$ for all $1 \leq j \leq m.$ Then $H \cap \Gamma_2= \prod_{j=1}^m (H_j\cap \Gamma_2)$ is of finite index in $\Gamma_2$. Moreover, $H_j\cap \Gamma_2$ is of finite index in $H_j$ and hence by the first part $e(H_j) = e(H_j\cap \Gamma_2)$, which shows that $\Gamma_2$ is virtually-$\mc{G}_{\infty}$ via $H \cap \Gamma_2.$\smallskip

For the second statement note that the combination of the first part and the assumption yields that $e(\Gamma_1\times\Gamma_2)=\infty.$ However, the Cayley graph of a direct product is the cartesian product of the Cayley graphs. Using this one can see that the number of ends of a direct product of infinite finitely generated groups is always one, a contradiction. \smallskip

For the third part, consider a subgroup $H = \prod_{j=1}^m H_j$ of finite index in $\Gamma$ with $e(H_j) = \infty$ for $1 \leq j \leq m.$ First remark that $\ZZ(H_j)$ is finite for each $j$. Indeed by Stallings' theorem \cite{StaPaper,StaBook} $H_j$
can be decomposed as a non-trivial amalgamated product or HNN extension over a finite group $C$. In such decomposition one necessarily has that $\ZZ(H_j) \leq C$. Therefore also $\ZZ (H_j)$ is finite, as claimed. Consequently, $\ZZ(H)$ is finite. Now, since $H \cap \ZZ (\Gamma) \leq \ZZ(H)$ is of finite index in $\ZZ (\Gamma)$ we obtain that $\ZZ (\Gamma$, as desired.\smallskip

Finally, unit groups of orders are commensurable by \cite[Lemma 4.6.9]{EricAngel1} and hence indeed the first statement applies to them. 
\end{proof}

Next we need the following lemma, which is a generalization of \cite[Proposition 4.5]{JesRioReine}. 

\begin{lemma}\label{when simple has infinitly ends}
Let $G$ be a finite group, $D$ be a finite dimensional division algebra over $\Fi$ with $\Char (F)=0$, different\footnote{This condition is not necessary, i.e\ the number of ends of $GL_2(\O)$ for $\O$ an order in $\qa{-2}{-5}{\Q}$ is not infinite. However including this case would make the proof unnecessarily lengthy.} of $\qa{-2}{-5}{\Q}$, and suppose $\Ma_n(D)$ with $n\geq 2$ is a simple component of $F G$. If $\mathcal{O}$ is an order in $\Ma_n(D)$, then $e(\U( \O)) = \infty$ if and only if $n=2$ and $D=F=\Q$.
\end{lemma}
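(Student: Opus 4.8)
The plan is to use that the number of ends is a commensurability invariant (\Cref{facts on number of ends}\,(1)) to replace $\U(\O)$ by a well-understood arithmetic lattice, and then to read off the number of ends from the real Lie group in which that lattice sits.

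I would first reduce via the centre. Assume $e(\U(\O))=\infty$. Then $\U(\O)$ itself lies in $\mc{G}_{\infty}$, so by \Cref{facts on number of ends}\,(3) its centre $\ZZ(\U(\O))$ is finite. Since $e(\U(\O))=\infty$ forces $\U(\O)$ to be infinite, the group $G:=(\Ma_n(D)\otimes_\Q\R)$ of reduced-norm-one elements is noncompact, so by Borel density $\U(\O)$ is Zariski dense in $\Ma_n(D)$ and hence $\ZZ(\U(\O))$ equals the group of central (scalar) units, which is commensurable with $\U(R)$. By Dirichlet's unit theorem $\U(R)$ is finite exactly when $F$ has a single archimedean place, i.e.\ $F=\Q$ or $F$ is imaginary quadratic. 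In either case $\U(R)$ is finite, so the reduced norm $\Nrd\colon\U(\O)\to\U(R)$ has finite image and its kernel $\SL_1(\O)=\{u\in\U(\O):\Nrd(u)=1\}$ has finite index; by \Cref{facts on number of ends}\,(1) it therefore suffices to compute $e(\SL_1(\O))$.

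Next I would pass to the Lie group. By Borel--Harish-Chandra, $\SL_1(\O)$ is an arithmetic lattice in the real semisimple Lie group $G$, with symmetric space $X=G/K$; because $F$ has a unique archimedean place, $G$ is (according to the splitting behaviour of $D$ there) one of $\SL_{nd}(\R)$, $\SL_n(\H)=\SU^*(2n)$, or $\SL_{nd}(\C)$, where $d^2=[D:F]$. Moreover, as $n\geq2$ the algebra $\Ma_n(D)$ is never a division algebra, so by Godement's criterion $\SL_1(\O)$ is a \emph{non-uniform} lattice. If $\rk_{\R}G\geq 2$, then $G$, and hence the infinite lattice $\SL_1(\O)$, has Kazhdan's property (T), so it has property (FA) and cannot split over a finite subgroup; by Stallings' theorem \cite{StaPaper,StaBook} this gives $e(\SL_1(\O))=1$. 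Inspecting the three cases, the rank drops to $1$ only for $G\in\{\SL_2(\R),\SL_2(\C),\SU^*(4)\}$: the first forces $n=2$ and $D=F=\Q$; the second forces $n=2$ and $D=F$ imaginary quadratic; the third is the definite-quaternion case with $n=2$ (which is where the excluded algebra $\qa{-2}{-5}{\Q}$ sits).

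It remains to treat the three rank-one groups, and this is the main obstacle, since for $\SL_2(\C)$ and $\SU^*(4)$ property (T) is unavailable. For $G=\SL_2(\R)$ one has $\SL_1(\O)$ commensurable with $\SL_2(\Z)\cong C_4\ast_{C_2}C_6$, which is virtually a non-abelian free group, so $e=\infty$; this is the asserted unique positive case. For $G=\SL_2(\C)$ and $G=\SU^*(4)$ the space $X$ is real hyperbolic of dimension $3$, respectively $5$, and I would prove $e(\SL_1(\O))=1$ geometrically: the lattice acts cocompactly on the contractible manifold-with-boundary Borel--Serre bordification $\overline X$, so $e(\SL_1(\O))>1$ iff $H^1_c(\overline X)\neq0$, and Poincaré--Lefschetz duality gives $H^1_c(\overline X)\cong H_{\dim X-2}(\partial\overline X)$. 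Since $\partial\overline X$ is a disjoint union of contractible horospherical pieces, this vanishes as soon as $\dim X\geq 3$; equivalently, these lattices are fundamental groups of finite-volume hyperbolic manifolds of dimension $\geq 3$, which are one-ended. This shows $e(\U(\O))=\infty$ only in the case $n=2$, $D=F=\Q$, as required. I note that this last argument in fact also covers $\qa{-2}{-5}{\Q}$, so the exclusion in the hypothesis is only needed to streamline an alternative, more computational treatment of that single algebra.
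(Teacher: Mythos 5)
Your argument is correct, and its skeleton (reduce to a maximal order, use finiteness of the centre to pass to the reduced-norm-one group, then kill the higher-rank case with property (T)/FA and Stallings) is the same as the paper's; the genuine divergence is in how the rank-one cases are handled. The paper exploits the hypothesis that $\Ma_n(D)$ is a component of a group algebra: via \cite{EKVG} it reduces to finitely many explicit $D$ (four imaginary quadratic fields and, after excluding $\qa{-2}{-5}{\Q}$, the quaternion algebras $\qa{-1}{-1}{\Q}$ and $\qa{-1}{-3}{\Q}$), quotes \cite[Theorem 5.1]{BJJKT} to get property FA for $\GL_2(\O_D)$ in all but the $\Q$ and $\Q(\sqrt{-2})$ cases, and for $\Q(\sqrt{-2})$ inspects the explicit Fine--Frohman amalgam to exclude a splitting over a finite group. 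You instead argue uniformly: every rank-one case other than $\SL_2(\R)$ gives a non-uniform lattice in $\SL_2(\C)$ or $\SU^*(4)$ acting properly cocompactly on a Borel--Serre bordification of real hyperbolic $3$- or $5$-space whose boundary components are contractible horospheres, and Poincar\'e--Lefschetz duality forces one-endedness. This buys generality -- you never use that $\Ma_n(D)$ is a component of a group algebra, you avoid all case analysis, and you prove in passing the footnoted claim that excluding $\qa{-2}{-5}{\Q}$ is unnecessary -- at the cost of heavier machinery (Borel--Harish-Chandra, Godement, Borel--Serre) where the paper only needs FA results it already relies on elsewhere. Two small points to tidy: the identification $\SL_n(\H)=\SU^*(2n)$ tacitly assumes $D$ is a quaternion algebra (for $D$ of higher degree ramified at the real place one gets $\SU^*(nd)$, but those have rank at least $2$, so your rank-one list is unaffected), and the duality computation should either pass to a torsion-free finite-index subgroup via Selberg's lemma or note that $e(\Gamma)$ equals the number of ends of any connected space on which $\Gamma$ acts properly cocompactly; both fixes are routine.
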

\begin{proof} 
Suppose  $e(\U( \O)) = \infty$. \Cref{facts on number of ends} allows to assume without lose of generality that $\O$ is a maximal order in $\Ma_n(D)$. It is well known that in that case $\O \cong \Ma_n(\mathcal{O}_{max})$ with $\mathcal{O}_{max}$ a maximal order in $D$. 

Next recall that any group with infinitely many ends has finite center (as central elements need to be in the subgroup over which the amalgam and HNN are constructed, which is finite in this case). Therefore,  $\SL_n(\O_{max})$ has finite index in $\GL_n(\O_{max})$ and hence $\SL_n(\O_{max})$ also has infinitely many ends. This implies that $\SL_n(\O_{max})$ has $S$-rank $1$, with $S$ the set of infinite places, as otherwise it has hereditarily Serre's property FA (even property $(T)$ \cite{MargulisBook,ErsJai}) and hence can not have a non-trivial amalgam or HNN splitting.

The S-rank being one means that $n= 2$ and $D$ is either $\Q(\sqrt{-d})$, with $d \geq 0$ or $\qa{-a}{-b}{\Q}$ with $a,b$ strictly positive integers (see for instance \cite[Theorem 2.10.]{BJJKT}). 
Furthermore it was proven in \cite{EKVG} that the condition that $\Ma_2(D)$ is a component of a group algebra yields that $d \in \{0, -1,-2,-3\}$ and $(a,b) \in \{(1,1),(1,3),(2,5)\}$. All these division algebras are (right norm) Euclidean and due to this have a unique maximal order (see \cite[remark 3.13]{BJJKT}), which we denote $\O_D$. By assumption $(a,b) = (2,5)$ does not occur. Now, following \cite[Theorem 5.1]{BJJKT} $\GL_2(\O_D)$ has property FA except if $D= \Q$ or $\Q(\sqrt{-2})$. In case of $D= \Q(\sqrt{-2})$ one can use the amalgam decomposition of $\SL_2(\Z[\sqrt{-2}])$ given in \cite[Theorem 2.1]{FroFin} to see that the group does not admit a splitting over a finite group. Finally, $\GL_2(\Z) = D_8 \ast_{C_2 \times C_2} D_{12}$ and hence $e(\GL_2(\Z)) = \infty$, finishing the proof.
\end{proof}

We now proceed with the proof of the main result.

\begin{proof}[Proof of \Cref{VSP for infinite ends}]
First observe that \Cref{facts on number of ends} entails that if $\Gamma \leq \U (\O)$ is virtually-$\mathcal{G}_{\infty}$ for some order $\O$ in $FG$, then also all other finite index subgroups of the unit group of any other order in $FG$ is virtually-$\mathcal{G}_{\infty}$. In particular we obtained:\smallskip

\noindent \underline{Claim 0:} Statement $\ref{item:vinf for all}$ holds if and only if $\U(RG)$, with $R$ the ring of integers in $\Fi$, is virtually-$\mathcal{G}_{\infty}$. \smallskip

The above claim yields the implication $\ref{item:vinf for grp ring} \Rightarrow \ref{item:vinf for all} $ and the converse follows once we prove that virtually-$\mathcal{G}_{\infty}$ forces $\Fi$ to be the field of rational numbers. 

Next, for the remaining of the proof fix a Wedderburn-Artin decomposition $F G = \bigoplus_{i=1}^q \Ma_{n_i}(D_i)$. Furthermore, fix a maximal order $\O_i$ in $D_i$ for each $i$ such that $\U (RG)$ is a subgroup of $\prod_{i=1}^q \GL_{n_i}(\O_i).$

By our starting observation one has that $\U( R G)$ is virtually-$\mathcal{G}_{\infty}$ if and only if $\prod_{i=1}^q \GL_{n_i}(\O_i)$ is. Now if $\Ma_{n_i}(D_i)$ is either a field or a totally definite quaternion algebra, then $\SL_1(\O_i)$ is finite by \cite[Proposition 5.5.6 ]{EricAngel1}. Also recall that $\Fi$ is in the center of every simple component of $FG$. Hence, if $\Ma_2(\Q)$ is a simple component, then $F= \Q.$ We now obtain the implication \ref{item:vinf in comp} $\Rightarrow$ \ref{item:vinf for grp ring} by combining all the previous with \Cref{when simple has infinitly ends}.

Now suppose that \ref{item:vinf for all} holds. Equivalently, by claim $0$, suppose that $\U (R G)$ is virtually-$\mathcal{G}_{\infty}$ and therefore also $\prod_{i=1}^q \GL_{n_i}(\O_i)$.\smallskip

\noindent \underline{Claim 1:} $\ZZ (\U ( R G))$ and $\ZZ(\O_i)$ are finite for all $1\leq i\leq q$. \smallskip

 This follows from \Cref{facts on number of ends} and the virtually-$\mathcal{G}_{\infty}$ property.\smallskip

Claim $1$ implies that $\SL_{n_j}(\O_j)$ is of finite index in $\GL_{n_j}(\O_j)$ for all $1 \leq j \leq q$. In particular, $e(\GL_{n_j}(\O_j))= e(\SL_{n_j}(\O_j)).$ \smallskip

\noindent \underline{Claim 2:} $e(\SL_{n_j}(\O_j))= \infty$ for all $j$ such that $M_{n_j}(D_j)$ is different of a field or totally definite quaternion algebra (e.g.\ all $j$ for which $n_j \geq 2$). \smallskip

Let $H = \prod_{j=1}^m H_m$ be a subgroup of finite index in $\U(RG)$ such that $e(H_j) = \infty$ for each $1 \leq j \leq m.$ Denote $S_j := \SL_{n_j}(\O_j) \cap H$ which is of finite index in $\SL_{n_j}(\O_j)$, hence it is enough to proof that $e(S_j)=\infty$. Let $p_k$ be the projection of $H$ on $H_k$. Fix some $j$ as in the statement of claim $2$. The condition is equivalent with saying that $\SL_{n_j}(\O_j)$ is infinite \cite{Kleinert}. In particular there exists some $k$ such that $p_k(S_j)$ is infinite\footnote{Otherwise $S_j$ would be finite and hence also the overgroup of finite index $\SL_{n_j}(\O_j)$.}. For such $k$ we will now prove that $|p_k(\prod_{i \neq j} S_i)| < \infty$. For this consider $S := S_j \times  \prod_{i \neq j} S_i$ which by the first claim is of finite index in $H$. Therefore $p_k(S)$ is of finite index in $H_k$ and hence $e(p_k(S)) = \infty$. Now note that $p_k(S)$ is commensurable with $p_k(S_j) \times p_k(\prod_{i \neq j} S_i)$. Indeed, the two subgroups clearly commute, are normal in $p_k(S)$ and $p_k(S_j) \cap p_k(\prod_{i \neq j} S_i) \subseteq \ZZ(p_k(S))$ which is finite since $p_k(S)$ has infinitely many ends. Thus we may apply\footnote{Instead of that statement one could have used the well known \cite[4.A.6.3.]{StaBook} saying that infinite finitely generated normal subgroups of a group with infinitely many ends need to have finite index.} \Cref{facts on number of ends} to conclude that $|p_k(\prod_{i \neq j} S_i)| < \infty$.

Now consider the set $\mathcal{I}_j := \{ k \mid |p_k(S_j)| < \infty \}.$ From the previous it follows that if $k \in \{1, \ldots, q\} \setminus \mathcal{I}_j$, then $p_k(S_j)$ is of finite index in $H_k$. Hence $S_j/ \big( S_j \cap \prod\limits_{i \in \mathcal{I}_j} H_i \big)$ is a subgroup of finite index in $\prod_{k \notin \mathcal{I}_j} H_k$. As the quotient was with a finite subgroup, we obtain that $S_j$ is virtually-$\mathcal{G}_{\infty}$ and hence $\SL_{n_j}(\O_j)$ also. However, under the conditions stated in claim $2$, $\SL_1$ is virtual indecomposable \cite[Theorem 1]{KleRio}. Therefore $\SL_{n_j}(\O_j)$ in fact is even virtually a group with infinitely many ends and so in fact $e(\SL_{n_j}(\O_j))= \infty$, finishing the proof of claim $2$.\vspace{0,1cm}

 Altogether: Claim $1$ says that $\mc{Z}(\O_i)$ is finite and consequently $\SL_{n_j}(\O_j)$ is of finite index in $\GL_{n_j}(\O_j)$ for all $j$. In particular, $e(\GL_{n_j}(\O_j))= \infty$ if $n_j \geq 2.$ Now \Cref{when simple has infinitly ends} implies that $n_j =2$, i.e.\ no higher matrix algebras appear in the decomposition of $FG.$ In such a case no $\qa{-2}{-5}{\Q}$ component arises. Indeed, following \cite[table appendix]{BJJKT} such a component can only arise if $F = \Q$ and $G$ maps onto one of the groups with SmallGroupID [40,3], [240,89] or [240,90]. But a direct verification, e.g.\ via the Wedderga package in GAP, shows that these groups all have matrix components of reduced degree at least $3$. 

 Consequently, \Cref{when simple has infinitly ends} says that all matrix components of $FG$ must be isomorphic to $\Ma_2(\Q)$ and in particular $F= \Q$ (as $\Fi$ is contained in the center of every simple component). As pointed out earlier, together with claim $0$ this finishes the proof the equivalence \ref{item:vinf for all} $ \Leftrightarrow $ \ref{item:vinf for grp ring}

 Furthermore, by \cite[Theorem 2.10. \& Proposition 6.11.]{BJJKT}, if $\Q G e$ is a division algebra $D$ for some primitive central idempotent $e$ of $\Q G$ then $D$ is $\Q(\sqrt{-d})$ with $d\in\Z_{\geq 0}$ or it is a totally definite quaternion algebra with center $\Q$. In summary, we obtained that all components of $\Q G$ are of the desired form, yielding the remaining implication \ref{item:vinf for all} $\Rightarrow $ \ref{item:vinf in comp}.\smallskip

 Finally, that only the parameters $(-1,-1)$ and $(-1,-3)$ appear is due to \cite[Theorem 11.5.14]{Voight} saying that else $\U( \O)$ is cyclic for any order $\O$ in $\qa{-a}{-b}{\Q}$. In those cases $Ge \leq \U(\Z Ge)$ would have an abelian $\Q$-span and thus $\Q Ge \neq \qa{-a}{-b}{\Q}$, a contradiction.
 \end{proof}

 \subsection{Proofs VSP for a given amount of ends}

Finally, we characterize the case that $\mc{U}(RG)$ is virtually a group with infinitely many ends, and not only a product of such.

\begin{proof}[Proof of \Cref{th: infinite ends}]
Let $FG \cong \prod_{i=1}^q \Ma_{n_i}(D_i)$ be the Wedderburn-Artin decomposition of $FG$. Further consider maximal orders $\mc{O}_i$ in $D_i$. Thanks to the commensurability of unit groups of orders and of the number of ends, one has that $\U(RG)$ is a subgroup of finite index in $\prod_{i=1}^q \GL_{n_i}(\O_i)$ and $e(\Gamma)= e(\U(R G)) = e(\prod_{i=1}^q \GL_{n_i}(\O_i)).$ However, the Cayley graph of a direct product is the cartesian product of the Cayley graphs. Using this we see that $e(Q\times P) = 1$ for any finitely generated infinite groups $P,Q$. Therefore $e(\U(R G)) = \infty$ if and only if $e(\GL_{n_{i_0}}(\O_{i_0})) = \infty$ for exactly one $i_0$ and the other factors are finite. In light of \Cref{when simple has infinitly ends} and \cite[Theorem 2.10.]{BJJKT} this happens exactly when $F G$ has exactly one $\Ma_2(\Q)$ component and all the others are $\Q, \Q(\sqrt{-d})$ or $\qa{-a}{-b}{\Q}$. In particular, $F$ must be $\Q$. Furthermore, since $\GL_2(\Z)$ is virtually free we see that in those cases $\U(R G)$ is indeed virtually free. This concludes the equivalence \ref{item: infty gamma} $\Leftrightarrow$ \ref{item: infty grp ring} and moreover gives a description in terms of the isomorphism type of the simple factors of $FG.$ \medskip

Suppose that $G$ is isomorphic to $D_6, D_8, Dic_3$ or $C_4 \rtimes C_4$. It easily verified that they have the following Wedderburn-Artin decomposition:
\begin{equation*}
\begin{array}{lcl}
    \Q[D_6] &\cong& \Q^{\times 2} \times \Ma_2(\Q) \\
    \Q[D_8]  & \cong &\Q^{\times 4} \times \Ma_2(\Q)\\
    \Q[Dic_3] & \cong & \Q^{\times 2} \times \Q(i) \times \qa{-1}{-3}{\Q} \times \Ma_2(\Q) \\
    \Q[C_4 \rtimes C_5] &\cong & \Q^{\times 4} \times \Q(i)^{\times 2} \times \qa{-1}{-1}{\Q} \times \Ma_2(\Q)
\end{array}
\end{equation*}
By the description obtained earlier via simple component of $FG$, we obtain that for all the groups above $e(\U(\Z G))=\infty,$ yielding \ref{item: the grps} $\Rightarrow$ \ref{item: infty grp ring}.  It remains to prove that these groups are the only one where simple components of $\Q G$ satisfies the necessary restrictions \smallskip

Recall that the unit group of the maximal orders of  $\qa{-1}{-1}{\Q}$ and $\qa{-1}{-3}{\Q}$ are respectively $\SL(2,3) \cong Q_8 \rtimes C_3$ and $Dic_3$. As $\Q[\SL_2(\mathbb{F}_3)]$ has a component $\Ma_3(\Q)$, we have that $G$ can not map onto $\SL_2(\mathbb{F}_3)$. Since $\SL_2(\mathbb{F}_3)$ has a unique isomorphism type of non-abelian subgroup, namely $Q_8$, the group $G$ must map onto $Q_8$ if $\qa{-1}{-1}{\Q}$ is a simple factor of $FG$. Now recall that $\U (\Z G)$ is contained in $\prod_{i=1}^q \GL_{n_i}(\O_i)$ with $\GL_{n_i}(\O_i)$ isomorphic to $\GL_2(\Z) \cong D_{12} \ast_{C_2 \times C_2} D_8$ or a maximal order in a quaternion algebra or field mentioned earlier. Then, altogether we have that 
\begin{equation}\label{eq: amalgam of unit grp with 1 comp}
\U(\Z G) \text{ is a subgroup of finite index in } (D_8 \times U) \ast_{C_2 \times C_2 \times U} (D_{12} \times U)
\end{equation}
where $U = A \times Q_8^{s} \times Dic_3^{t}$ for some $s,t$ and with $A$ abelian with $\exp(A) \mid 4,6$.  Using the description of torsion subgroups in amalgamated products we know that, up to conjugation, $G$ is a subgroup of $C_2 \times C_2 \times U$ or it contains transversal elements in the factor $D_8$ or $D_{12}$. 

First suppose that $G$ is conjugated to a subgroup of the amalgamated part $C_2\times C_2 \times U$. Recall that all subgroups of $Dic_3$ and $Q_8$ are cyclic. From this it is readily concluded that the only way to have exactly one matrix component $\Ma_2(\Q)$ is for $G$ to be $Dic_3$. 

Next,  suppose that $G$ is not conjugated to a subgroup of the amalgamated part. Then $\U(\Z G)$ contains a non-trivial amalgamated product $G \ast_{G \cap (C_2 \times C_2 \times U)} (D_m \times U)$ with $m=8$ or $12$. Now \cite[Proposition 2.7]{JTT} yields an irreducible representation $\rho$ of $G$ of reduced degree at least $2$ such that $\ker(\rho)$ is contained in $G \cap (C_2 \times C_2 \times U)$. By assumption on the components of the group algebra, the representation co-restricts to a morphism $G \rightarrow \GL_2(\Z).$ Note that the image $\rho(G)$ needs to be $D_6$ or $D_8$ as $D_{12}$ has two matrix components. From all this we can infer that in order for $\Q G$ to not have more than one matrix component, the intersection $G \cap (C_2 \times C_2 \times U)$ needs to be a central subgroup of order $2$. Thus $G$ is a central extension of $D_6$ or $D_8$ with a $C_2$. A look at the classification of groups of order $12$ and $16$ shows that $G$ is isomorphic to either $Dic_3$ or $C_4 \times C_4$, finishing the proof.
\end{proof}

Next we prove the characterization of when a unit group has exactly two ends.
\begin{proof}[Proof of \Cref{prop:RGtwoends}]
As recalled in the proof of \Cref{when simple has infinitly ends}, $e(\Gamma_1)= e(\Gamma_2)$ if $\Gamma_1$ and $\Gamma_2$ are commensurable (see \cite[p.\ 38]{StaBook}). Moreover, the unit group of two orders are commensurable \cite[lemma 4.6.9]{EricAngel1}. Therefore, $e(\Gamma) = 2$ if and only if $e(\U (RG)) = 2$ for $R$ the ring of integers of $\Fi$. In particular, (ii) implies (i). Furthermore, since for general finitely generated groups $\Gamma$ one has that $e(\Gamma)=2$ if and only if $\Gamma$ is $\Z$-by-finite, we obtain that statement \ref{item: gamma 2} implies that $\U(RG)$ is $\Z$-by-finite.

We will now prove simultaneously that $\U(RG)$ to be $\Z$-by-finite implies the restrictions on $F$ and $G$ mentionned in \ref{item: grpring 2} and \ref{item: groups 2}, which would finish the proof. To do so, recall a result by Kleinert \cite{Kleinert} (or \cite[Corollary 5.5.7]{EricAngel1}) saying that $\U(R G)$ is abelian-by-finite if and only if all the simple components of $F G$ are either fields or totally definite quaternion algebras. In particular $FG$ has no non-trivial nilpotent elements in which case \cite[Theorem 2.6]{SehNilp} tells that $G$ is either abelian or $G \cong Q_8 \times C_2^m \times A$ with $m \geq 0$ and $A$ an abelian group of odd order. We first consider the case that $G \cong Q_8 \times C_2^m \times A$. Then 
$$F G \cong F[Q_8 \times C_2^m] \ot_F FA \cong (4m \, F \op m \qa{-1}{-1}{F}) \ot_{F} FA.$$

We now see that in order to obtain a {\it single} copy of $\Z$ in $\U (R G)$ that this will have to come from a component of $F A$. However this component will appear at least $4$ times and hence such groups are never $\Z$-by-finite.

Now suppose that $G$ is abelian. By the theorem of Perlis-Walker \cite[Theorem 3.5.4]{PolSeh} 
\begin{equation}\label{Perlis-walker}
F G \cong \bigoplus_{d \mid |G|} a_d \frac{[\Q(\zeta_d): \Q]}{[F(\zeta_d):F]}F(\zeta_d)
\end{equation}
with $a_d$ the number of different cyclic subgroups of order $d$. Now denote by $R_{F,d}$ the ring of integers of $F(\zeta_d)$ and recall that by Dirichlet Unit theorem \cite[Theorem 5.2.4]{EricAngel1} the rank of the finitely generated abelian group $\U(R_{F,d})$ is $n_1 + n_2 - 1$ with $n_1$ the number of real embeddings of $F(\zeta_d)$ and $r_2$ the number of pairs of complex embeddings. Note that the rank of $\U(R_{F,d})$ is at least the one of the unit group of the ring of integers of the cyclotomic field $\Q (\zeta_d).$ The latter rank is well-known to be $\frac{\varphi(d)}{2}-1$. A direct computation yields that $\varphi(d) \leq 4$ if and only if $d \in \{2,3,4,5,6 8, 10, 12 \}$ with equality only for $\{ 5,8,10,12 \}$. This combined with \eqref{Perlis-walker} we see that we have {\it exactly one} copy of $\Z$ if and only if $G$ is isomorphic to $C_n$ with $n=5,8$ or $12$, $\text{rank } \U(R_{F,n}) = \text{rank } \U(R_{\Q,n})=1$ and $\text{rank } \U (R) = 0.$ The latter means that $\Fi$ is either $\Q$ or an imaginary quadratic extension of $\Q$ and hence yields the implication \ref{item: gamma 2} $\Rightarrow$ \ref{item: grpring 2}. For the restriction on $\Fi$ written in \ref{item: groups 2} note that due to the values of $n$ the field $\Q(\zeta_d)$ with $d \nmid n$ is either $\Q$ or an imaginary quadratic extension and hence $\text{rank } \U (R) = 0.$ Furthermore, $F(\zeta_n) = \Q(\zeta_n)$ and hence  $\text{rank } \U(R_{F,n}) = \text{rank } \U(R_{\Q,n})$ as desired.
\end{proof}

\subsection{Proof of \Cref{th: main application}}

For efficiency reasons we will first prove the Zassenhaus conjecture type of property for the projections to the simple components of $FG$.

\begin{proof}[Proof of \ref{item: block ZC} of \Cref{th: main application}]
Supppose that $\U (RG)$ is virtually-$\mc{G}_{\infty}$. Let $H$ be a finite subgroup of $V(R G)$ and $e \in \PCI(FG)$ and $H \leq V(R G)$. Observe that the theorem of Cohn-Livingstone \cite[Corollary 2.7]{AngelSurvey} implies that
\begin{equation}\label{order diving in projection}
|He| \mid |Ge|
\end{equation}

Indeed, denote by $\Phi: \Z [G] \rightarrow \Z [Ge]$ the ring epimorphism associated to $e$ which satisfied $\Phi(V(\Z[G])) \subseteq V(\Z[Ge])$. Thus $|\Phi(H)| \mid |Ge|$. To prove that $|He| \mid |\Phi(H)|$ consider the ring epimorphism $\pi \colon \Z[G] \rightarrow \Z[G]e: x \mapsto xe.$ One has that $\omega(G,N) \subseteq \ker(\pi)$. It follows that $\pi$ factorises through $\Z[Ge]$ i.e. there exists a unique morphism $\sigma \colon \Z[Ge] \rightarrow \Z[G]e$ such that $\pi = \sigma \circ \Phi.$ In particular, the order of $He = \pi(H)=\sigma( \Phi(H))$ divides $|\Phi(H)|$, as desired.\smallskip

Now we consider the possible isomorphism types for $FGe$ separately. First suppose that $FGe$ is a division algebra, i.e. $\Q(\sqrt{-d})$ or $\qa{-a}{-b}{\Q}$ by \Cref{VSP for infinite ends}. Then the unit group of its unique maximal order is finite \cite{Kleinert} and in particular also $\U(RGe)$ is finite. Hence if $FGe$ is a field, then the latter unit group is cyclic and thus $He \leq Ge$ due to \eqref{order diving in projection}. If $F Ge \cong \qa{-a}{-b}{\Q}$, then by \cite[Theorem 11.5.14]{Voight} $\U (R Ge)$ is cyclic except if (1) $(a,b)= (-1,-1)$ and $RGe$ the Lipschitz or Hurwitz order (which is maximal) or (2) $(a,b) = (-1,-3)$ and $R Ge$ is the maximal order\footnote{A short concrete summary of the above orders can also be found before and after Theorem 3.14. of \cite{BJJKT}.} of $\qa{-1}{-3}{\Q}$. In case of the maximal orders, one must have that $Ge$ is their unit group and hence $He \leq Ge$. If $R Ge$ is the Lipschitz order, then $Ge \cong Q_8$ and $He$ is some subgroup of the unit group of the Hurwitz quaternions, i.e. a subgroup of $\SL_2(\mathbb{F}_3)\cong Q_8 \rtimes C_3$. Via \eqref{order diving in projection} we see that in fact $He \leq Ge$, as claimed.\smallskip

It remains to consider the components where $FGe \cong \Ma_2(\Q).$ In that case, 
$\U(RG)e$ is a subgroup of finite index in $\GL_2(\Z) \cong D_8 \ast_{C_2\times C_2}D_{12}.$ Therefore there exists some $\alpha_e \in \U (R Ge)$ such that $\alpha_e^{-1}Ge \alpha_e$ is a subgroup of $D_8$ or $D_{12}$. Since the $\Q$-span of $Ge$  is $\Ma_2(\Q)$ we get that $\alpha_e^{-1}Ge \alpha_e= D_6, D_{12} \text{ or } D_8$. In particular $|Ge|$ determines uniquely its isomorphism type.  The same holds for $He$ if $He$ is non-abelian. Therefore, by \eqref{order diving in projection}, we have the desired statement in that case. Suppose now that $|He| = 4$. If $He \cong C_4$, then $He$ is up to conjugation a subgroup of $D_8$ and due to the dividing of the orders, again $He \leq Ge$ after conjugation. If it is an elementary abelian $2$-group, then it is uniquely defined in both $D_{12}$ and $D_8$. As this subgroup is amalgamated we are done.

\end{proof}

\begin{proof}[Proof of \ref{item: block normal complements} and \ref{item: normal complement} of \Cref{th: main application}]
Suppose that $U(RG)$ is virtually-$\mc{G}_{\infty}$. For each $e\in \PCI(FG)$ consider a maximal order $\mc{O}_e$ of $FGe$ containing $RGe$. One has that $V(RG)$ is of finite index in $\prod_{e\in \PCI(FG)} \U(\mc{O}_e).$ Therefore, $V(RG)e$ is of finite index in $\U(\mc{O}_e)$.

Now suppose that $FGe$ is not a division algebra and hence $FGe \cong \Ma_2(\Q)$ by \Cref{VSP for infinite ends}. Recall that $\Ma_2(\Z)$ is the unique maximal order of $\Ma_2(\Q)$. Therefore $V(RG)e$ is of finite index in $\GL_2(\Z)\cong D_8 \ast_{C_2 \times C_2} D_{12}.$ This entails that $Ge$ is conjugated to $D_{2\ell}$ for $\ell=3,4$ or $6$. Moreover, by part \ref{item: block ZC} all torsion subgroups in $V(RG)e$ are conjugated to a subgroup of $Ge$. Combining these facts with \cite[Proposition 8.2]{JJS} and Karrass-Solitar's description \cite[Theorem 5]{KS} of subgroups of an amalgam one can deduce that $A \ast_C B$, it follows that $V(RG)e \cong \langle F_{m_e}, C_2 \times C_2\rangle \ast_{C_2 \times C_2} Ge$
for some free subgroup $F_{m_e}$ with $m_e$ given in \cite[Corollary pg 247]{KS}. From this follows that $V(RG)e \cong \langle F_{m_e}^G \rangle \rtimes G$, as desired.

Now suppose that $e(\U(RG))= \infty.$ By \Cref{th: infinite ends} this means that there is exactly one non-division component of the form $\Ma_2(\Q)$ and the other components are of the form $\Q(\sqrt{-d})$ with $d \in \N$ or $\qa{-a}{-b}{\Q}$. As proven in the proof of \Cref{th: infinite ends}, see \eqref{eq: amalgam of unit grp with 1 comp}, we have that  $V(R G)$ is a subgroup of finite index in $(D_8 \times U) \ast_{C_2 \times C_2 \times U} (D_{12} \times U)$ with $U = A \times Q_8^{s} \times Dic_3^{t}$ for some $s,t$ and with $A$ abelian with $\exp(A) \mid 4,6$. An analogue reasoning as for \ref{item: block normal complements} now yields the desired conclusion.
\end{proof}

\bibliographystyle{plain}
\bibliography{VSPEnds}

\end{document}